\definecolor{orange}{rgb}{1.0, 0.5, 0.0}
\numberwithin{equation}{section}
\newtheorem{theorem}{Theorem}[section]
\theoremstyle{plain}
\newtheorem{lemma}[theorem]{Lemma}
\theoremstyle{plain}
\theoremstyle{plain}
\theoremstyle{definition}
\newtheorem{remark}[theorem]{Remark}
\newcommand{\N}{{\mathbb N}}
\newcommand{\R}{{\mathbb R}}
\newcommand{\eps}{\varepsilon}
\newcommand{\beq}{\begin{equation}}
\newcommand{\eeq}{\end{equation}}
\renewcommand{\le}{\leqslant}
\renewcommand{\ge}{\geqslant}
\newcommand{\fl}{(-\Delta)^s\,}
\def\XXint#1#2#3{{\setbox0=\hbox{$#1{#2#3}{\int}$ }
\vcenter{\hbox{$#2#3$ }}\kern-.6\wd0}}
\newcommand{\leqnomode}{\tagsleft@true}
\newcommand{\reqnomode}{\tagsleft@false}
\newenvironment{enumroman}{\begin{enumerate}

}{\end{enumerate}}
\title[Nonlinear equations with mixed operators]{EXISTENCE OF SOLUTIONS FOR NONLINEAR EQUATIONS WITH MIXED LOCAL AND NONLOCAL OPERATORS}
\author[A.\ Iannizzotto]{Antonio Iannizzotto}
\address[A.\ Iannizzotto]{Dipartimento di Matematica e Informatica
\newline\indent
Universit\`a degli Studi di Cagliari
\newline\indent
Via Ospedale 72, 09124 Cagliari, Italy}
\email{antonio.iannizzotto@unica.it}
\subjclass[2010]{35R11, 35A15.}
\keywords{Logistic equation, Mixed operator, Variational methods.}
\begin{document}

\begin{abstract}
We study an elliptic equation, with homogeneous Dirichlet boundary conditions, driven by a mixed type operator (the sum of the Laplacian and the fractional Laplacian), involving a parametric reaction and an undetermined source term. Applying a recent abstract critical point theorem of Ricceri, we prove existence of a solution for a convenient source and small enough parameters.
\end{abstract}

\maketitle

\begin{center}
{\em Dedicated with admiration and gratitude to Prof. Biagio Ricceri -- La semplicit\`a \`e la forma della vera grandezza (F. De Sanctis).}
\end{center}

\section{Introduction and main result}\label{sec1}

\noindent
Elliptic equations driven by mixed local and nonlocal operators have been the subject of a number of interesting studies in the latest years, partly because they are used to describe the superposition of two different stochastic processes (Brownian motion and L\'evy flight), and partly due to a genuinely mathematical interest. The model for such operators is the sum of a Laplacian and a fractional Laplacian, with order $s\in(0,1)$, the latter being defined for all $u:\R^N\to\R$ as
\[\fl u(x) = C_{N,s}\,{\rm PV}\int_{\R^N}\frac{u(x)-u(y)}{|x-y|^{N+2s}}\,dy,\]
where 'PV' stands for 'principal value' and the multiplicative constant is
\[C_{N,s} = \Big[\int_{\R^N}\frac{1-\cos(z_1)}{|z|^{N+2s}}\,dz\Big]^{-1}.\]
In such case, a linear nonhomogeneous equation with homogeneous Dirichlet type conditions, set in a bounded domain $\Omega\subset\R^N$ with a general source term $h\in L^2(\Omega)$, will read as
\[\begin{cases}
-\Delta u+\fl u = h(x) & \text{in $\Omega$} \\
u = 0 & \text{in $\Omega^c$,}
\end{cases}\]
where we notice that the conditions hold on $\Omega^c=\R^N\setminus\Omega$, rather than just on $\partial\Omega$, due to the nonlocal character of the operator $\fl$. The leading operator is a linear integro-differential one. We say that $u\in H^1(\R^N)$ is a weak solution of the problem above, if $u=0$ a.e.\ in $\Omega^c$ and for all $v\in C^\infty_c(\Omega)$
\[\int_\Omega\nabla u\cdot\nabla v\,dx+C_{N,s}\iint_{\R^N\times\R^N}\frac{(u(x)-u(y))(v(x)-v(y))}{|x-y|^{N+2s}}\,dx\,dy = \int_\Omega h(x)v\,dx.\]
We refer the reader to \cite{BDVV} for an introduction to mixed operators, and to \cite{DNPV} for the fractional Laplacian and the related Sobolev spaces. Also, see \cite{BMV} for a nonlinear extension of the operator above, and \cite{DPLV} for the (non-obvious) formulation of a Neumann type boundary condition. Finally, \cite{MBRS} presents a general framework for purely nonlocal elliptic equations, seen in a variational perspective.
\vskip2pt
\noindent
In this short note, we let aside the delicate intertwining of local and nonlocal dynamics, which mostly affect the regularity of solutions, and we hold on to a simple approach, which consists in treating $\fl u$ as a lower order term in the equation. On the other hand, we consider a more general reaction involving, beside the source $h$, a generalized logistic type reaction depending on two powers $q,r\in(1,2^*)$ and a positive parameter $\lambda$:
\beq\label{log}
\begin{cases}
-\Delta u+\fl u = \lambda|u|^{q-2}u-|u|^{r-2}u+h(x) & \text{in $\Omega$} \\
u = 0 & \text{in $\Omega^c$.}
\end{cases}
\eeq
Purely nonlocal equations of the type above, with $h=0$, have been considered in \cite{IM,IMP}. In both works, bifurcations results are proved. In \cite{IM} the sublinear case $1<q<r<2$ is studied, and it is proved that positive solutions exist if, and only if, $\lambda$ lies above a threshold $\lambda_*>0$. In \cite{IMP}, an analogous result is proved for the supercritical regime $2<q<r$. Here we let $q$, $r$ span the whole subcritical interval, and we prove existence of at least one solution for $\lambda$ {\em below} a positive threshold and for a convenient smooth source $h$. In addition, we localize solutions in the unit ball with respect to the fractional seminorm:

\begin{theorem}\label{cor}
Let $\Omega\subset\R^N$ ($N\ge 2$) be a bounded domain with a $C^1$ boundary, $r\in(1,2^*)$. Then, there exists $h\in C^\infty_c(\Omega)$ with the following property: for all $q\in(1,2^*)$ there exists $\lambda^*>0$ s.t.\ for all $\lambda\in[0,\lambda^*]$ problem \eqref{log} has at least one solution $u$ with
\[\iint_{\R^N\times\R^N}\frac{|u(x)-u(y)|^2}{|x-y|^{N+2s}}\,dx\,dy < 1.\]
\end{theorem}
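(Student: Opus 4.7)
The plan is to recast \eqref{log} as a variational problem in a Hilbert space suited to both $-\Delta$ and $\fl$, split the energy into a convex coercive principal part and a compactly generated perturbation, and apply Ricceri's abstract critical point theorem. Concretely, I would work in
\[
X = \{u \in H^1(\R^N) : u = 0 \text{ a.e.\ in } \Omega^c\},
\]
endowed with the natural inner product
\[
\langle u,v\rangle_X = \int_\Omega \nabla u \cdot \nabla v\,dx + C_{N,s}\iint_{\R^N\times\R^N}\frac{(u(x)-u(y))(v(x)-v(y))}{|x-y|^{N+2s}}\,dx\,dy,
\]
so that $X$ is a real Hilbert space with norm equivalent to $\|\cdot\|_{H^1}$ (via Poincar\'e on $\Omega$ and continuity of the Gagliardo bilinear form on $H^1$) and the fractional seminorm $[\,\cdot\,]_s$ is continuous on it. Weak solutions of \eqref{log} coincide with the critical points of
\[
J_\lambda(u) = \tfrac12\|u\|_X^2 + \tfrac1r\int_\Omega|u|^r\,dx - \tfrac{\lambda}{q}\int_\Omega|u|^q\,dx - \int_\Omega hu\,dx,
\]
which is of class $C^1(X)$ thanks to the subcritical bounds $q,r\in(1,2^*)$ and the compact embeddings $X\hookrightarrow L^q(\Omega),L^r(\Omega)$.

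Next, I would write $J_\lambda = \Phi - \lambda\Psi$, where
\[
\Phi(u) = \tfrac12\|u\|_X^2 + \tfrac1r\|u\|_r^r - \int_\Omega hu\,dx,\qquad \Psi(u) = \tfrac1q\|u\|_q^q.
\]
The functional $\Phi$ is strictly convex, coercive, sequentially weakly lower semicontinuous, and $C^1$; hence it admits a unique global minimizer $\bar u = \bar u(h) \in X$, which is the weak solution of \eqref{log} in the degenerate case $\lambda = 0$. The functional $\Psi$ is $C^1$ with compact derivative, by the compact embedding $X \hookrightarrow L^q(\Omega)$. I would then localize on the open convex set
\[
B = \Bigl\{u\in X : \iint_{\R^N\times\R^N}\tfrac{|u(x)-u(y)|^2}{|x-y|^{N+2s}}\,dx\,dy < 1\Bigr\}
\]
and invoke Ricceri's theorem, which for a suitably chosen source $h$ produces a threshold $\lambda^* = \lambda^*(q) > 0$ and, for every $\lambda \in [0,\lambda^*]$, a critical point $u_\lambda \in B$ of $J_\lambda$, namely the desired weak solution with $[u_\lambda]_s^2 < 1$.

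The heart of the matter is the preparatory choice of $h \in C^\infty_c(\Omega)$, which must be fixed \emph{before} $q$ is specified. Ricceri's theorem requires a strict separation between $\inf_X \Phi$ and the values of $\Phi$ on $\partial B$, so that the minimizer $\bar u$ lies well inside $B$ with room to spare. I would take $h = \eps h_0$ with $h_0 \in C^\infty_c(\Omega)$ any nonzero bump and $\eps > 0$ small: a Lax--Milgram type estimate yields $\|\bar u\|_X \le C\eps$, and the continuous inclusion of $X$ into the fractional Sobolev space then gives $[\bar u]_s^2 \le C'\eps^2$, so that $\bar u$ sits deep in $B$ for $\eps$ small enough, uniformly in $q$. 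The main obstacle I anticipate is exactly the precise verification of Ricceri's separation hypothesis in the quantitative form demanded by the abstract theorem: it couples two inequivalent quantities on $X$ --- the energy norm $\|\cdot\|_X$ that controls $\Phi$ and the fractional seminorm $[\,\cdot\,]_s$ that defines $B$ --- a coupling that must ultimately be handled by the Sobolev-type estimate $[u]_s \le C\|\nabla u\|_2$ valid on $X$.
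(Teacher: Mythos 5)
Your variational framework (the space $X$, the energy functional, the subcritical growth and compact embeddings into $L^q,L^r$, the idea of localizing in the fractional ball) matches the paper's, but the core of the argument is missing and the abstract tool is mischaracterized. The result actually rests on \cite[Theorem 2.1]{R} (Theorem \ref{ric}), whose logic you have inverted: there the source $h$ is an \emph{output} of the abstract theorem --- it asserts the existence of some $h$ in a weakly* dense convex subset $S\subset Y^*$ (here $S=C^\infty_c(\Omega)$) for which the perturbed problem has a critical point in ${\rm int}(K)$ --- not a datum to be tuned small, and it contains no hypothesis of ``strict separation between $\inf_X\Phi$ and the values of $\Phi$ on $\partial B$''. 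Its hypotheses are structural: a closed convex $K$ with nonempty interior whose boundary is sequentially weakly closed in $X$ and whose image under a bounded linear $\psi:X\to Y$ is non-convex; a strictly convex, coercive $\Phi+\Psi$ with $\Psi-\varphi$ constant on $\partial K$; and a sequentially weakly lower semicontinuous $\Upsilon$. Your proposed $h=\eps h_0$ smallness argument gives no mechanism at all for producing critical points of the non-convex $J_\lambda$ inside $B$ when $\lambda>0$; that step is deferred to an unspecified ``Ricceri theorem'' which, as you describe it, is not the one that applies here.

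Two concrete missing ingredients. First, with $K=\{u\in X:[u]_s\le 1\}$, the sequential weak closedness of $\partial K=\{[u]_s=1\}$ in $X$ is not automatic (a level set of a convex functional is in general not weakly closed); the paper obtains it from an ad hoc compactness result, Lemma \ref{cmp}, stating that $X$ embeds \emph{compactly} into $Y=\{u\in H^s(\R^N):u=0 \text{ a.e.\ in }\Omega^c\}$, so that $u_n\rightharpoonup u$ in $X$ forces $[u_n]_s\to[u]_s=1$. The same embedding furnishes the operator $\psi$ (the identity $X\to Y$) and the auxiliary space $Y$ in which $\psi(\partial K)$ is seen to be non-convex (since $\pm u\in\partial K$ while $0\notin\partial K$). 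Your outline invokes only the continuous estimate $[u]_s\le C\|\nabla u\|_2$, which does not suffice for either point. Second, the splitting of the energy is forced by the theorem's structure: the fractional part $\frac{C_{N,s}}{2}[u]_s^2$ must play the role of $\Psi$, precisely because it is constant (equal to $\varphi\equiv C_{N,s}/2$) on $\partial K$, while $\frac12\|\nabla u\|_2^2+\frac1r\|u\|_r^r$ is the strictly convex $\Phi$ and $-\frac1q\|u\|_q^q$ is the weakly continuous perturbation $\Upsilon$. Lumping the whole quadratic form into $\Phi$ and casting $\frac1q\|u\|_q^q$ as $\Psi$ destroys exactly the coupling between the localizing set $K$ and the energy on which the application of Theorem \ref{ric} (and hence the conclusion $[u]_s<1$) depends.
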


\noindent
In fact, we will prove a more general result, with the pure powers replaced by non-autonomous reactions with subcritical growth (see Theorem \ref{main} below). Note that the general powers considered prevent the use of the direct method of the calculus of variations, while the lack of a control at zero makes it delicate to apply min-max schemes (such as the mountain pass theorem), which anyway do not ensure localization in general. On the other hand, Theorem \ref{cor} yields no information on the sign of the solution.
\vskip2pt
\noindent
Our approach is entirely based on a recent work of Ricceri \cite{R}. In such work, two 'unusual' existence results are proved for nonlinear elliptic equations affected by another type of nonlocality, consisting in the presence of a Kirchhoff type potential. Both exploit an abstract theorem (see Theorem \ref{ric} below), which ensures existence of critical points of a perturbed functional, localized in the interior of a closed convex subset $K$ of a Banach space $X$. Here the qualifying assumption is that $K$ has nonempty interior and a nonconvex, sequentially weakly closed boundary. In \cite{R}, such $K$ is defined as a sublevel set for a sequentially weakly continuous integral functional, so $\partial K$ is the corresponding level set, which ensures closure. We have chosen as $K$ the closed unit ball in the solution space, induced by a fractional order seminorm, which turns out to be a sequentially weakly continuous functional due to an {\em ad hoc} compact embedding result (see Lemma \ref{cmp}), thus implying the localizazion in Theorem \ref{cor}.

\section{Background}\label{sec2}

\noindent
We establish a variational framework for (a generalized version of) problem \eqref{log}. Let $\Omega$ be a bounded domain in $\R^N$ ($N\ge 2$) with a $C^1$-smooth boundary $\partial\Omega$, so $\Omega$ is an extension domain for Sobolev type functions. For all $p\in[1,\infty]$ we denote by $\|\cdot\|_p$ the standard norm of $L^p(\R^N)$, and we systematically identify functions defined in $\Omega$ with their $0$-extensions to $\R^N$.
\vskip2pt
\noindent
Our solutions space consists of the $0$-extensions to $\R^N$ of functions in $H^1_0(\Omega)$. Equivalently, such space can be defined as
\[X = \big\{u\in H^1(\R^N):\,u=0 \ \text{a.e.\ in $\Omega^c$}\big\},\]
which is a separable Hilbert space with norm $\|u\|=\|\nabla u\|_2$. We recall that $X$ is compactly embedded into $L^p(\R^N)$ for all $p\in[1,2^*)$, with
\[2^* = \begin{cases}
\displaystyle\frac{2N}{N-2} & \text{if $N>2$} \\
\infty & \text{if $N=2$.}
\end{cases}\]
Also, given $s\in(0,1)$, we recall the definition of the (Gagliardo) $s$-fractional seminorm
\[[u]_s = \Big[\iint_{\R^N\times\R^N}\frac{|u(x)-u(y)|^2}{|x-y|^{N+2s}}\,dx\,dy\Big]^\frac{1}{2},\]
and we define the fractional Sobolev space $H^s(\R^N)$ as the space of all $u\in L^2(\R^N)$ with $[u]_s<\infty$ (see \cite{DNPV}). To incorporate the Dirichlet condition, we define
\[Y = \big\{u\in H^s(\R^N):\,u=0 \ \text{a.e.\ in $\Omega^c$}\big\},\]
again a Hilbert space with norm $[u]_s$ (see, for instance, \cite[Eq.\ (1.65)]{MBRS}). The variational formulation of the fractional Laplacian, in our framework, is the following: for all $u\in Y$, $\fl u$ is the gradient of the functional
\[u \mapsto \frac{C_{N,s}}{2}[u]_s^2,\]
where $C_{N,s}>0$ is defined as in Section \ref{sec1}. Equivalently, for all $u,v\in Y$
\[\langle\fl u,v\rangle = C_{N,s}\iint_{\R^N\times\R^N}\frac{(u(x)-u(y))(v(x)-v(y))}{|x-y|^{N+2s}}\,dx\,dy.\]
We could not find a direct proof in the literature, so we include a short proof of the following simple compactness result:

\begin{lemma}\label{cmp}
The space $X$ is compactly embedded into $Y$.
\end{lemma}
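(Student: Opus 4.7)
The plan is to establish sequential compactness. Let $(u_n)$ be a bounded sequence in $X$. Since $X$ is reflexive and compactly embeds into $L^2(\R^N)$ (a standard Rellich type result, as all $u_n$ vanish outside the bounded set $\Omega$), I can extract a subsequence with $u_n\rightharpoonup u$ in $X$ and $u_n\to u$ in $L^2(\R^N)$. Setting $w_n:=u_n-u$, the task reduces to showing $[w_n]_s\to 0$, where $(w_n)$ is bounded in $X$ and $w_n\to 0$ in $L^2$. As a byproduct, the same estimates applied to a fixed $u\in X$ simultaneously prove that $X\hookrightarrow Y$ is continuous, which is needed to make the statement meaningful.

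The main idea is to split the Gagliardo double integral according to the distance $|x-y|$. Fix $\delta>0$ and decompose $[w_n]_s^2 = I_n(\delta)+J_n(\delta)$, where $I_n(\delta)$ is the integral over $\{|x-y|\le\delta\}$ and $J_n(\delta)$ over its complement. For $J_n(\delta)$, the elementary bound $|w_n(x)-w_n(y)|^2\le 2|w_n(x)|^2+2|w_n(y)|^2$ together with Fubini yields $J_n(\delta)\le C(\delta)\|w_n\|_2^2$ with $C(\delta)=4\int_{|z|>\delta}|z|^{-N-2s}\,dz<\infty$, and this vanishes as $n\to\infty$ thanks to the $L^2$ convergence. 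For $I_n(\delta)$, I would invoke the identity $w(x)-w(y)=\int_0^1\nabla w(y+t(x-y))\cdot(x-y)\,dt$, valid for a.e.\ $x,y$ when $w\in H^1(\R^N)$ by density from $C^\infty_c$. Cauchy--Schwarz, the change of variables $z=x-y$, and Fubini then give
\[I_n(\delta)\le\|\nabla w_n\|_2^2\int_{|z|\le\delta}|z|^{2-N-2s}\,dz \;=\; C\,\delta^{2-2s}\,\|w_n\|^2,\]
where finiteness of the radial integral uses precisely $s<1$, and $C$ is independent of $n$.

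To conclude, given $\varepsilon>0$ I would first choose $\delta$ so small that $C\delta^{2-2s}\sup_n\|w_n\|^2<\varepsilon/2$, and then $n$ so large that $C(\delta)\|w_n\|_2^2<\varepsilon/2$, which forces $[w_n]_s^2<\varepsilon$. The only slightly delicate point is the fundamental theorem of calculus along almost every segment for $H^1$ functions, which follows from absolute continuity on lines (or from a standard smooth approximation argument). Everything else is bookkeeping, and the conceptual content really reduces to the observation that the strong $L^2$ convergence controls the long-range contribution to the Gagliardo integral, while the uniform $H^1$ bound controls the short-range contribution.
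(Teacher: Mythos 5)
Your argument is correct, but it follows a genuinely different route from the paper. The paper uses the Fourier characterization $[u]_s^2=\frac{2}{C_{N,s}}\int_{\R^N}|\xi|^{2s}|\mathcal{F}u(\xi)|^2\,d\xi$ and interpolates in frequency space via the Young-type inequality $|\xi|^{2s}\le\eps|\xi|^2+C\eps^{(s-1)/s}$, so that the $H^1$ bound absorbs the high frequencies and the strong $L^2$ convergence kills the low ones. You instead interpolate in physical space, splitting the Gagliardo double integral at $|x-y|=\delta$: the far-off-diagonal part is controlled by $\|w_n\|_2^2$ (which tends to $0$), and the near-diagonal part by $\delta^{2-2s}\|\nabla w_n\|_2^2$ via the fundamental theorem of calculus along segments; both radial integrals converge precisely because $0<s<1$. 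The two proofs are the same interpolation in different guises, and your bookkeeping checks out (your estimate applied to a single $u$ also recovers continuity of the embedding, consistent with \cite[Proposition 2.2]{DNPV}). The one step you should not leave implicit is the a.e.\ validity of $w(x)-w(y)=\int_0^1\nabla w(y+t(x-y))\cdot(x-y)\,dt$ for $w\in H^1(\R^N)$: the cleanest fix is to prove the near-diagonal bound for $w\in C^\infty_c(\R^N)$ and pass to general $w$ by density and Fatou, which you essentially indicate. The trade-off is that the Fourier route is shorter and avoids this technicality but is tied to the Hilbertian case $p=2$, whereas your decomposition works verbatim in $W^{1,p}$ versus $W^{s,p}$ and would thus settle the extension suggested in Remark \ref{wsp} without appealing to interpolation inequalities.
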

\begin{proof}
Continuity of the embedding $X\hookrightarrow Y$ follows from \cite[Proposition 2.2]{DNPV}. Hence, we focus on compactness. From \cite[Proposition 3.4]{DNPV} we know that $Y$ can be endowed with an equivalent norm, defined via Fourier transform. Indeed, for all $u\in Y$ we have
\beq\label{cmp1}
[u]_s^2 = \frac{2}{C_{N,s}}\int_{\R^N}|\xi|^{2s}|\mathcal{F}\,u(\xi)|^2\,d\xi,
\eeq
where $\mathcal{F}$ denotes the Fourier transform. Let $(u_n)$ be a bounded sequence in $X$, then we have for some $C>0$ independent of $n$
\[\int_{\R^N}|\xi|^2|\mathcal{F}\,u(\xi)|^2\,d\xi \le C.\]
By the compact embedding $X\hookrightarrow L^2(\R^N)$, we can find $u\in X$ and a relabeled subsequence s.t.\ $u_n\to u$ in $L^2(\R^N)$. Set $v_n=u_n-u$ for all $n\in\N$, so $v_n\to 0$ in $L^2(\R^N)$. We claim that $v_n\to 0$ in $Y$. By \eqref{cmp1}, we may equivalently prove that
\[\lim_n\,\int_{\R^N}|\xi|^{2s}|\mathcal{F}\,v_n(\xi)|^2\,d\xi = 0.\]
Fix $\eps>0$. For all $n$ big enough we have
\[\|\mathcal{F}\,v_n\|_2^2 < \eps^\frac{1}{s}.\]
Also, by Young's inequality, we have for all $\xi\in\R^N$
\[|\xi|^{2s} \le \eps|\xi|^2+C\eps^\frac{s-1}{s},\]
with $C>0$ independent of $\eps$. All these inequalities yield for all $n\in\N$ big enough (and a possibly bigger $C$)
\begin{align*}
\int_{\R^N}|\xi|^{2s}|\mathcal{F}\,v_n(\xi)|^2\,dx &\le \eps\int_{\R^N}|\xi|^2|\mathcal{F}\,v_n(\xi)|^2\,dx+C\eps^\frac{s-1}{s}\int_{\R^N}|\mathcal{F}\,v_n(\xi)|^2\,dx \\
&\le C\eps.
\end{align*}
Letting $\eps\to 0$, we find the desired convergence and prove the claim. Thus, $u_n\to u$ in $Y$.
\end{proof}

\begin{remark}\label{wsp}
We believe that a more general result than Lemma \ref{cmp} above can be proved, namely, that $W^{1,p}_0(\Omega)$ is compactly embedded into $W^{s,p}_0(\Omega)$ for all $p>1$. For instance, one could exploit an interpolation inequality, see \cite{VS}.
\end{remark}

\section{Proof of the main result}\label{sec3}

\noindent
We begin by recalling, for the reader's convenience, the abstract critical point theorem that we are going to apply:

\begin{theorem}\label{ric}
{\rm \cite[Theorem 2.1]{R}} Let $X$ be a reflexive Banach space, $Y$ be a normed space, $\psi:X\to Y$ be a bounded linear operator, $S$ be a weakly* dense, convex subset of $Y^*$, $K$ be a closed convex subset of $X$ s.t.\ ${\rm int}(K)\neq\emptyset$, $\partial K$ is sequentially weakly closed in $X$, $\psi(\partial K)$ is not convex in $Y$. Also let $\phi:\partial K\to\R$, $\Phi:K\to\R$ be a functional, G\^ateaux differentiable in ${\rm int}(K)$. Then, there exists $h\in S$ with the following property. For all $\Psi:K\to\R$, G\^ateaux differentiable in ${\rm int}(K)$, s.t.\ $\Psi-\varphi$ is constant in $\partial K$, $\Phi+\Psi$ is lower semicontinuous, strictly convex in $K$, and
\[\lim_{\|u\|\to\infty}\,\frac{\Phi(u)+\Psi(u)}{\|u\|} = +\infty \ \text{if $K$ is unbounded,}\]
and for all $\Upsilon:K\to\R$ sequentially weakly lower semicontinuous, G\^ateaux differentiable in ${\rm int}(K)$, there exists $\lambda^*>0$ s.t.\ for all $\lambda\in[0,\lambda^*]$ there exists $u\in{\rm int}(K)$ s.t.\
\[\Phi'(u)+\Psi'(u)+\lambda\Upsilon'(u)-h\circ\psi = 0 \ \text{in $X^*$.}\]
\end{theorem}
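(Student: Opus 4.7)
The plan is to construct $h \in S$ so that, for every admissible $\Psi$, the unique minimizer of the auxiliary functional $J_0:=\Phi+\Psi-h\circ\psi$ on $K$ lies in ${\rm int}(K)$: the Euler--Lagrange equation will then give the critical point for $\lambda=0$, and a perturbation argument in $\lambda$ extends the conclusion to small $\lambda>0$. For any $h\in Y^*$, strict convexity of $\Phi+\Psi$ together with weak continuity of $h\circ\psi$ (since $\psi$ is bounded linear on the reflexive space $X$) ensures that $J_0$ is strictly convex and sequentially weakly lower semicontinuous on $K$; the coercivity hypothesis (or boundedness of $K$) then yields a unique minimizer $u_h\in K$, and if $u_h\in{\rm int}(K)$, G\^ateaux differentiability gives $\Phi'(u_h)+\Psi'(u_h)=h\circ\psi$ in $X^*$.

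The crux is to force $u_h\in{\rm int}(K)$ through a careful choice of $h\in S$, using that $\psi(\partial K)$ is not convex in $Y$ and that $S$ is weakly* dense in $Y^*$. The approach is by contradiction: suppose no $h\in S$ yields $u_h\in{\rm int}(K)$, so $\psi(u_h)\in\psi(\partial K)$ for every $h\in S$. A duality argument between $h$ and the subgradient of $\Phi+\Psi$ at $u_h$ (the minimizer condition says that $h\circ\psi$ supports $\Phi+\Psi$ at $u_h$ from below, compatibly with the convex constraint $u_h\in\partial K$), combined with the fact that the hypothesis $\Psi-\phi={\rm const}$ on $\partial K$ fixes $\Phi+\Psi$ on $\partial K$ up to an additive constant (depending on $\Psi$ but not on $h$), should show that the set $\{\psi(u_h):h\in S\}$ inherits enough convex structure from the convex $S$ to violate the non-convexity of $\psi(\partial K)$. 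The weakly* density of $S$ in $Y^*$ then propagates this contradiction to the whole of $Y^*$.

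Once $h\in S$ and $u_0\in{\rm int}(K)$ are obtained for $\lambda=0$, the case $\lambda>0$ is handled by perturbation. For $\Upsilon$ as in the statement and $\lambda>0$ small, the functional $J_\lambda:=J_0+\lambda\Upsilon$ remains sequentially weakly lower semicontinuous and (when $K$ is unbounded) coercive, hence admits a minimizer $u_\lambda$. Strict convexity of $\Phi+\Psi$ near $u_0$ forces $u_\lambda\to u_0$ in $X$ as $\lambda\to 0^+$; since ${\rm int}(K)$ is open, $u_\lambda\in{\rm int}(K)$ for all $\lambda\in[0,\lambda^*]$ with $\lambda^*>0$ small enough, and the Euler--Lagrange equation at $u_\lambda$ yields the claimed critical point identity.

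The main obstacle is the second paragraph: extracting a suitable $h\in S$ from the geometric hypothesis on $\psi(\partial K)$. This is the genuinely novel core of Ricceri's theorem; a rigorous implementation presumably requires one of his minimax-type principles, or an adroit Hahn--Banach separation of a point in $\overline{\rm conv}\,\psi(\partial K)\setminus\psi(\partial K)$, combined with the explicit duality between $h$ and the minimizer $u_h$.
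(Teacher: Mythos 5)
First, a point of reference: the paper does not prove Theorem \ref{ric} at all --- it is quoted verbatim from Ricceri \cite[Theorem 2.1]{R} and used as a black box --- so your attempt must stand on its own. Your outer scaffolding is sound: for fixed $h\in S$ the functional $\Phi+\Psi-h\circ\psi$ is strictly convex, weakly lower semicontinuous (convex plus lower semicontinuous, and $h\circ\psi\in X^*$) and coercive on the weakly closed convex set $K$, so it has a unique minimizer $u_h$, which satisfies the Euler--Lagrange equation precisely when $u_h\in{\rm int}(K)$. The passage to small $\lambda$ can also be made rigorous, but not as you state it: $J_\lambda$ is not convex, so ``the minimizer $u_\lambda$'' and ``$u_\lambda\to u_0$ in $X$'' are not available. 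The correct route is by contradiction: if $\lambda_n\to 0^+$ and $J_{\lambda_n}$ had a minimizer $u_n\in\partial K$ for every $n$, coercivity bounds $(u_n)$, a weak limit lies in $\partial K$ because $\partial K$ is sequentially weakly closed (this is the only place that hypothesis is used), and weak lower semicontinuity makes that limit a minimizer of $J_0$ on $\partial K$, contradicting uniqueness of $u_h\in{\rm int}(K)$.

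The genuine gap is your second paragraph, and you acknowledge it yourself: you never produce $h$. The entire content of the theorem is the implication ``if for every $h\in S$ the minimizer $u_h$ lies on $\partial K$, then $\psi(\partial K)$ is convex,'' and your proposal replaces its proof with ``should show'' and ``presumably requires one of his minimax-type principles.'' Two features make this step harder than your sketch allows. First, the quantifiers: the single $h$ must be chosen \emph{before} $\Psi$ and $\Upsilon$, and must work for all of them; this is exactly why the hypothesis that $\Psi-\phi$ is constant on $\partial K$ appears (it pins the boundary values of $\Phi+\Psi$ down to those of $\Phi+\phi$ up to an additive constant, independently of which $\Psi$ is later selected), so the contradiction scheme has to be run for the fixed boundary functional $\Phi+\phi$, not for a particular $\Phi+\Psi$ as your duality sketch implicitly does. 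Second, a Hahn--Banach separation of a point of ${\rm conv}\,\psi(\partial K)\setminus\psi(\partial K)$ produces an element of $Y^*$, not of $S$, and upgrading weak-* density of the convex set $S$ to an actual $h\in S$ whose minimizer leaves $\partial K$ requires a stability argument (that boundary minimizers persist under small weak-* perturbations of $h$) which you neither state nor prove. As written, the proposal is a plausible plan whose central step is missing; it is not a proof.
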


\noindent
We now prove our existence and localization result, of which Theorem \ref{cor} is a special case.

\begin{theorem}\label{main}
Let $\Omega\subset\R^N$ ($N\ge 2$) be a bounded domain with a $C^1$ boundary, $g:\Omega\times\R\to\R$ be a Carath\'eodory mapping s.t.\ for a.e.\ $x\in\Omega$
\begin{enumroman}
\item\label{main1} $|g(x,t)| \le C_1(1+|t|^{r-1})$ for all $t\in\R$ $(C_1>0$, $r\in(1,2^*))$;
\item\label{main2} $g(x,\cdot)$ is nonincreasing in $\R$;
\item\label{main3} $g(x,t)t \le 0$ for all $t\in\R$.
\end{enumroman}
Then, there exists $h\in C^\infty_c(\Omega)$ with the following property. For all Carath\'eodory mapping $f:\Omega\times\R\to\R$ satisfying for a.e.\ $x\in\Omega$ and all $t\in\R$
\[|f(x,t)| \le C_2(1+|t|^{q-1}) \ (C_2>0, \ q\in(1,2^*)),\]
there exists $\lambda^*>0$ s.t.\ for all $\lambda\in[-\lambda^*,\lambda^*]$ we can find a weak solution $u$ of
\beq\label{dir}
\begin{cases}
-\Delta u+\fl u = \lambda f(x,u)+g(x,u)+h(x) & \text{in $\Omega$} \\
u = 0 & \text{in $\Omega^c$,}
\end{cases}
\eeq
and in addition $[u]_s<1$.
\end{theorem}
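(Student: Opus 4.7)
The plan is to apply Theorem \ref{ric} with the spaces $X$, $Y$ from Section \ref{sec2} and $\psi: X \to Y$ the natural inclusion, which is bounded linear and (by Lemma \ref{cmp}) compact. As the closed convex set I take $K = \{u \in X : [u]_s \le 1\}$: it contains $0$ in its interior; by Lemma \ref{cmp} the map $u \mapsto [u]_s^2$ is sequentially weakly continuous on $X$, so $\partial K = \{u \in X : [u]_s = 1\}$ is sequentially weakly closed; and $\psi(\partial K)$ lies on the unit sphere of the Hilbert space $Y$ and is therefore not convex. For the weak$^*$-dense convex subset of $Y^*$ I take $S = C^\infty_c(\Omega)$, identified with a subspace of $Y^*$ through the $L^2(\Omega)$ pairing: since any $v \in Y$ annihilated by every $h \in C^\infty_c(\Omega)$ vanishes a.e., the annihilator of $S$ in $Y$ is trivial, which together with convexity yields the required weak$^*$-density.

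For the functionals, with $\tilde G(x,t) = \int_0^t g(x,\tau)\,d\tau$ and $\tilde F(x,t) = \int_0^t f(x,\tau)\,d\tau$, define on $X$
\[\Phi(u) = \frac{1}{2}\int_\Omega |\nabla u|^2\,dx, \quad \Psi(u) = \frac{C_{N,s}}{2}[u]_s^2 - \int_\Omega \tilde G(x,u)\,dx, \quad \Upsilon(u) = -\int_\Omega \tilde F(x,u)\,dx,\]
and set $\varphi := \Psi|_{\partial K}$, so the compatibility condition $\Psi-\varphi \equiv 0$ on $\partial K$ is automatic. The derivatives $\Phi'(u) = -\Delta u$, $\Psi'(u) = \fl u - g(x,u)$, $\Upsilon'(u) = -f(x,u)$ make the Euler--Lagrange equation of Theorem \ref{ric} coincide with the weak formulation of \eqref{dir}. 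Assumption \ref{main2} makes $\tilde G(x,\cdot)$ concave, so $\Psi$ is convex (together with convexity of $[\cdot]_s^2$), and then $\Phi+\Psi$ is strictly convex (from $\Phi$) and continuous on $X$ (from \ref{main1} and the Sobolev embeddings). The key point is that \ref{main3} forces $\tilde G(x,t) \le 0$ for every $t$, so $\Phi+\Psi \ge \tfrac{1}{2}\|u\|^2$ on $X$, which gives the coercivity required by Theorem \ref{ric}. Finally, $\Upsilon$ is sequentially weakly continuous by the compact embedding $X \hookrightarrow L^q$ ($q<2^*$), hence sequentially weakly lower semicontinuous, and Gateaux differentiability of everything is routine.

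Theorem \ref{ric}, applied with the data above --- which depend only on $\Phi$ and $\varphi$, hence on $g$ but not on $f$ --- produces $h \in C^\infty_c(\Omega)$ such that for every $f$ with subcritical growth there is $\lambda^*>0$ with, for each $\lambda \in [0,\lambda^*]$, a critical point $u \in \mathrm{int}(K)$; this $u$ is a weak solution of \eqref{dir} and satisfies $[u]_s < 1$ automatically. To extend to $\lambda < 0$, I rerun the argument with $-f$ in place of $f$, which obeys the same growth bound and needs the same $h$ (since $h$ is independent of $f$); this yields $\tilde\lambda^*>0$ giving solutions for $\lambda \in [-\tilde\lambda^*,0]$, and replacing $\lambda^*$ by $\min\{\lambda^*,\tilde\lambda^*\}$ produces the symmetric interval. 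The main obstacle --- and the precise place where assumption \ref{main3} is indispensable --- is coercivity of $\Phi+\Psi$ on the (a priori unbounded) set $K$; the remaining checks are essentially bookkeeping, translating the PDE between the abstract and concrete formulations.
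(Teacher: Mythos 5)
Your argument is correct and follows essentially the same route as the paper: the same choice of $K$ as the unit ball of the Gagliardo seminorm, the same use of Lemma \ref{cmp} both for the compactness of $\psi$ and for the sequential weak closedness of $\partial K$, the same coercivity mechanism via $G\le 0$ coming from \ref{main3}, and the same reduction of negative $\lambda$ to the case of $-f$. The only (immaterial) difference is that you place $-\int_\Omega G(x,u)\,dx$ inside $\Psi$ and take $\varphi=\Psi|_{\partial K}$, whereas the paper keeps it in $\Phi$ and takes $\varphi$ to be the constant $C_{N,s}/2$; in both cases $h$ depends on $g$ but not on $f$, which is exactly what the statement requires.
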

\begin{proof}
Let $X$, $Y$ be defined as in Section \ref{sec2}. Set for all $u\in X$
\[\psi(u) = u,\]
so by Lemma \ref{cmp} $\psi:X\to Y$ is a linear compact operator. Also, set $S=C^\infty_c(\Omega)$, a weakly* dense linear subspace of the dual $Y^*$ (identified with $Y$ by Riesz' representation theorem). Set
\[K = \big\{u\in X:\,[u]_s\le 1\big\}.\]
Since $u\mapsto[u]_s$ is a convex continuous functional in $X$, then $K$ is a closed convex, unbounded set in $X$, with $0\in{\rm int}(K)$ and boundary
\[\partial K = \big\{u\in X:\,[u]_s=1\big\}.\]
For all $u\in\partial K$ we have $-u\in\partial K$ while $0\notin\partial K$, so $K$ is not convex in $Y$.
\vskip2pt
\noindent
We claim that $\partial K$ is sequentially weakly closed in $X$. Indeed, let $(u_n)$ be a sequence in $\partial K$, s.t.\ $u_n\rightharpoonup u$ in $X$. By \cite[Proposition 3.5]{B}, $(u_n)$ is bounded in $X$. Passing to a subsequence, if necessary, we have $u_n(x)\to u(x)$ for a.e.\ $x\in\R^N$. By Lemma \ref{cmp}, up to a further subsequence we have $u_n\to u$ in $Y$, hence
\[[u]_s = \lim_n\,[u_n]_s = 1,\]
which is equivalent to $u\in\partial K$.
\vskip2pt
\noindent
Set for all $u\in X$
\[\varphi(u) = \frac{C_{N,s}}{2}.\]
Also, for all $(x,t)\in\Omega\times\R$ set
\[G(x,t) = \int_0^t g(x,\tau)\,d\tau,\]
so by assumptions \ref{main2} \ref{main3} we see that $G:\Omega\times\R\to\R$ is a Carath\'eodory function, $G(x,\cdot)$ is concave, and for a.e.\ $x\in\Omega$ and all $t\in\R$
\beq\label{main4}
G(x,t) \le 0.
\eeq
Set for all $u\in X$
\[\Phi(u) = \frac{1}{2}\|u\|^2-\int_\Omega G(x,u)\,dx.\]
Then, $\Phi$ is strictly convex as the sum of a strictly convex functional and a convex one. By \ref{main1}, $\Phi\in C^1(X)$ with derivative given for all $u,v\in X$ by
\[\langle\Phi'(u),v\rangle = \int_\Omega\nabla u\cdot\nabla v\,dx-\int_\Omega g(x,u)v\,dx.\]
All the hypotheses of Theorem \ref{ric} are fulfilled, so let $h\in S$ be as in the statement of such result. Next we set for all $u\in X$
\[\Psi(u) = \frac{C_{N,s}}{2}[u]_s^2,\]
so $\Psi\in C^1(X)$ is strictly convex and, as recalled in Section \ref{sec2}, for all $u,v\in X$ we have
\[\langle\Psi'(u),v\rangle = \langle\fl u,v\rangle.\]
Also, for all $u\in\partial K$ we have
\[\Psi(u)-\varphi(u) = \frac{C_{N,s}}{2}\big([u]_s^2-1\big) = 0.\]
The functional $\Phi+\Psi:X\to\R$ is strictly convex and continuous. In addition, by \eqref{main4}, for all $u\in X\setminus\{0\}$ we have
\[\frac{\Phi(u)+\Psi(u)}{\|u\|} \ge \frac{\|u\|}{2},\]
and the latter tends to $\infty$ as $\|u\|\to\infty$. These observations make $\Psi$ a legitimate choice for Theorem \ref{ric}.
\vskip2pt
\noindent
There remains but one element to define. Set for all $(x,t)\in\Omega\times\R$
\[F(x,t) = \int_0^t f(x,\tau)\,d\tau,\]
and for all $u\in X$
\[\Upsilon(u) = -\int_\Omega F(x,u)\,dx.\]
By the growth condition on $f$ and the compact embedding $X\hookrightarrow L^q(\Omega)$, we see that $\Upsilon$ is sequentially weakly continuous in $X$, and $\Upsilon\in C^1(X)$ with
\[\langle\Upsilon'(u),v\rangle = -\int_\Omega f(x,u)v\,dx.\]
By Theorem \ref{ric}, there exists $\lambda^*>0$ s.t.\ for all $\lambda\in[0,\lambda^*]$ we can find $u\in{\rm int}(K)$ (i.e., $[u]_s<1$) s.t.\ in $X^*$
\beq\label{main5}
\Phi'(u)+\Psi'(u)+\lambda\Upsilon'(u)-h = 0.
\eeq
Replacing $f$ with $-f$, the conclusion is easily extended to the negative parameters $\lambda\in[-\lambda^*,0)$. Finally, we note that \eqref{main5} rephrases as follows: for all $v\in C^\infty_c(\Omega)$ we have
\[\int_\Omega\nabla u\cdot\nabla v\,dx+C_{N,s}\iint_{\R^N\times\R^N}\frac{(u(x)-u(y))(v(x)-v(y))}{|x-y|^{N+2s}}\,dx\,dy = \int_\Omega\big[\lambda f(x,u)+g(x,u)+h(x)\big]v\,dx.\]
Also, $u(x) = 0$ for a.e.\ $x\in\Omega$ by definition of $X$, so $u$ is a weak solution of \eqref{dir}.
\end{proof}

\noindent
For the sake of completeness, we give a proof of the special case seen in Section \ref{sec1}:
\vskip4pt
\noindent
{\em Proof of Theorem \ref{cor}.} Set for all $t\in\R$
\[g(t) = -|t|^{r-2}t, \ f(t) = |t|^{q-2}t,\]
so $g\in C^0(\R)$ satisfies conditions \ref{main1} \ref{main2} \ref{main3}, besides $f\in C^0(\R)$ satisfies the growth condition. Theorem \ref{main} then yields the existence and localization of a solution of \eqref{log}, for convenient $h\in C^\infty_c(\Omega)$ and $\lambda\in[0,\lambda^*]$. \qed
\vskip4pt
\noindent
{\bf Acknowledgement.} The author is a member of the group GNAMPA (Gruppo Nazionale per l'Analisi Matematica, la Probabilit\`a e le loro Applicazioni) of INdAM (Istituto Nazionale di Alta Matematica 'Francesco Severi') and is partially supported by the research projects {\em Partial Differential Equations and their role in understanding natural phenomena} (Fondazione di Sardegna 2023, CUP F23C25000080007) and {\em Regolarit\`a ed esistenza per operatori anisotropi} (GNAMPA 2025, CUP E5324001950001). We thank the anonymous referee for her/his kind words of appreciation, and E.\ Valdinoci for a precious nocturnal suggestion about Lemma \ref{cmp}.

\end{document}